\newtheorem{theo}{Theorem}
\newtheorem{corollaire}[theo]{Corollary}
\newtheorem{lemme}[theo]{Lemma}
\newtheorem{prop}[theo]{Property}
\newtheorem{props}[theo]{Properties}
\theoremstyle{definition}
\newtheorem{definition}[theo]{Definition}
\newtheorem{def-theo}[theo]{Definition-Property}
\theoremstyle{remark}
\newtheorem{remark}[theo]{Remark}
\newcommand{\sS}{\mathcal{S}\!}
\DeclareMathOperator{\mml}{\mathcal{M}\mathcal{L}}
\DeclareMathOperator{\pmml}{\mathcal{P}\mathcal{M}\mathcal{L}}
\DeclareMathOperator{\gl2}{\mathrm{G}\mathrm{L}_2 \!\left( \mathbb{Z}\right)}
\DeclareMathOperator{\umml}{\mathcal{U}\mathcal{M}\mathcal{L}}
\DeclareMathOperator{\emcg}{\mathrm{MCG}^{e}}
\newcommand{\quot}[2]{{\left. \raisebox{1.5px}{$#1$}\middle/ \raisebox{-2px}{$#2$}\right.}}
\newcommand{\autstr}[2]{\mathrm{Aut}\!\left( #1 , #2 \right)}
\DeclareMathOperator{\wscc}{\mathcal{W}\mathcal{S}}
\renewcommand{\epsilon}{\mathcal{E}}
\begin{document}

\title[A stratification of (projective) measured lamination space]{On a stratification of the space of (projective)  measured laminations}
\author{Vincent Alberge}

\begin{abstract}
We introduce a natural  stratification of the space of projective classes of measured  laminations on a complete hyperbolic surface of finite area. We prove a rigidity result, namely,   the group of self-homeomorphisms of  the space of projective measured  laminations that preserve such a stratification is in general identified with the extended mapping class group of the corresponding surface. We use this approach to fill a gap in the proof of the rigidity of the action of the extended mapping class group on the unmeasured laminations space.
\end{abstract}

\subjclass[2010]{37E30, 57M99.}
\keywords{Measured lamination, stratified space, mapping class group.}

\maketitle

\section{Introduction and notation}

Let $S$ be a complete hyperbolic surface of finite area of genus $g\geq 0$ with $n\geq 0$ cusps. Let us assume further that $(g,n)\neq (0,3), (0,4), (1,1)$.   A \emph{geodesic lamination} on  $S$  is a compact set of $S$ consisting of a union of disjoint simple complete geodesics on $S$ that are called the \emph{leaves} of the geodesic lamination.  A geodesic lamination is  said to be \emph{minimal} if each leaf is dense in the lamination. It is well known that any geodesic lamination is uniquely decomposed into the disjoint union of finitely minimal geodesic laminations together with a finite set of geodesics, each end of which spirals onto a minimal geodesic lamination. A geodesic lamination  is a \emph{measured lamination} on $S$ if there exists a transverse measure $\lambda$ of  support  the lamination itself which is invariant under the translations along the leaves. To avoid heavy notation we denote by  $\lambda$ the measured lamination and by $\left|\lambda\right|$ the support of the transverse measure.   A measured lamination is said to be \emph{minimal} if its support is a minimal geodesic lamination, and a measured lamination is said to be  \emph{uniquely ergodic} if  it admits, up to a multiplication by a positive real number, a unique transverse measure of full support.  It implies that any uniquely ergodic lamination is minimal. A measured lamination is said to be \emph{maximal} if its support is not properly contained in the support of a measured lamination. The set of measured laminations on $S$ is denoted by  $\mml$. We  assume that the empty lamination $\emptyset$ is a measured lamination. Elementary non-trivial examples of measured laminations are given by the set of simple closed geodesics denoted in this paper by  $\mathcal{S}$. The transverse measure associated with a simple closed geodesic  is the counting measure, namely, the measure that assigns to any geodesic transverse arc its number of intersection points with such a simple closed  geodesic. 
Other examples of measured laminations are given by the set  of \emph{weighted} simple closed geodesics  $\wscc$ of $S$, that is, the set of measured laminations of support a simple closed geodesic on $S$ and where the transverse measure is the corresponding counting measure multiplied by a positive real number. Let us point out that elements of $\wscc$ are  uniquely ergodic measured laminations.

We now recall that for any element $\lambda$ of $\mml$ and any element $\alpha\in\sS$, we can define the \emph{geometric intersection number} $i\left(\lambda, \alpha \right)$ as the measure of $\alpha$ with respect to  the measure $\lambda$. The geometric intersection number   induces a one-to-one mapping from $\mml$ to the space $\mathbb{R}_{+}^{\sS}$ of nonnegative functions  on $\sS$. We then equip the space $\mml$ with the topology defined by  pointwise convergence on $\sS$. It is a well-known theorem of Thurston  that the space $\mml$, when endowed with such a topology, is homeomorphic to an open ball of dimension $6g-6+2n$ and the set $\wscc$ is a dense subset in $\mml$. Furthermore,  it is also known that the intersection number defines a continuous function $i\left( \cdot , \cdot \right)$ from $\mml \times \mml$ to $\mathbb{R}$.
 
Let us continue to introduce notations. For a minimal measured lamination $\lambda$ which is not a weighted simple closed geodesic, there exists a unique connected subsurface of $S$ with totally geodesic boundary $\Sigma\left( \gamma \right)$, called the \emph{supporting surface} of $\lambda$, such that for any simple closed geodesic $\alpha$ in the interior of  $\Sigma\left( \gamma \right)$, we have $i(\lambda, \alpha)\neq 0$. Any connected component of the boundary  of $\Sigma\left( \gamma \right)$ is called a \emph{peripheral curve} associated with $\lambda$. In particular, if $\alpha$ is a peripheral curve associated with $\lambda$, then  for any measured lamination $\mu$ such that $i(\alpha, \mu)\neq 0$, $i(\lambda, \mu)\neq 0$. And conversely, such  a property characterises peripheral curves.

The natural action of the set of  positive real numbers $\mathbb{R}_+^{*}$ on $\mml$ which consists of multiplying the transverse measure by a constant positive number induces the coset $\quot{\mml\setminus\left\lbrace \emptyset \right\rbrace}{\mathbb{R}_+^{*}}$, denoted by $\pmml$ and called the space of \emph{projective measured laminations} of $S$.   The space $\pmml$ endowed with the quotient topology is therefore homeomorphic to the unit sphere $\mathbb{S}^{6g-7+2n}$ and admits the set $\sS$ as a dense subset.
We  denote the projective class of any element $\lambda\in\mml$ by $\left[\lambda\right]$. The support of a projective measured lamination is the support of any measured lamination which belongs to such an equivalence class.

For more details about geodesic laminations and measured laminations  we refer to \cite{bonahon,casson&bleiler} and \cite{canary&epstein&marden}. For another point of view which is more topological and which deals with the notion of measured foliations instead, we can refer to  \cite{flp}.

Let us also introduce the so-called \emph{unmeasured lamination space} $\umml$. Such a space is the quotient space of $\mml\setminus\left\lbrace \emptyset \right\rbrace$ (or $\pmml$) obtained by forgetting the transverse measures. Therefore, if $\lambda\in\mml\setminus\left\lbrace \emptyset\right\rbrace$ then its support $\left| \lambda\right|$ can be identified with its image in $\umml$. One can also think the set $\mathcal{S}$ as a subset of $\umml$. When equipped with the quotient topology, $\umml$ is a non-Hausdorff space. For further details about such a space and its  topology, we refer to \cite{papadop-uml}, \cite{papadop2} and \cite{ohshika1}. Although it is not common, when the topology is not involved  we  see $\umml$ as a subset of the set of geodesic laminations by identifying  its elements with their corresponding supports.

There exists a group that acts in a natural way on all of the spaces defined above. Such a group is the so-called \emph{extended mapping class group} of $S$ which is defined as the group of isotopy classes of all homeomorphisms of $S$. We denote it by  $\emcg$.

In this paper, after defining the notion of stratification in Section \ref{sec:def}, we shall see in Section \ref{sec:strat}  that there exist a stratification $\mathscr{S}$ of $\mml$ and a stratification $\mathrm{P}\!\mathscr{S}$ of $\pmml$ that are induced by the  unmeasured lamination space. In  Section \ref{sec:rigidity}, we shall prove the main result of this paper, namely, the group of self-homeomorphisms of $\pmml$ that preserve the stratification $\mathrm{P}\!\mathscr{S}$ is induced by $\emcg$.  This rigidity result for the action of the (extended) mapping class group is in the lineage of previous results proved by various authors, such as \cite{charitos&papadopoulos&papadoperakis,papadop2, ohshika,alberge&miyachi&ohshika} and most recently \cite{papadop&ohshika1} and \cite{papadop&ohshika2}. This list being not all-inclusive, we refer to the survey \cite{papadop3}. What is in common from all these papers is the use of results from \cite{ivanov,korkmaz} and \cite{luo}  saying that the group of automorphisms of the complex of curves of $S$ is (most of the time) induced by the extended mapping class group. Therefore, the key point in our case is to prove that any self-homeomorphism of $\pmml$ that preserves the stratification $\mathrm{P}\!\mathscr{S}$ defines an automorphism of the complex of curves. In the last section of this paper we shall fix a gap in the proof  on the rigidity for the action of the (extended) mapping class group  on $\umml$ that appeared in \cite{papadop2}. 


\section{Stratified space}\label{sec:def}

Most of the notions introduced in this section are based on \cite{kloeckner}. Let $X$ be a Hausdorff topological space, let $A$ be a set, and let $\mathscr{S}=\left\lbrace \mathscr{X}_i \right\rbrace_{i\in A}$ a  partition of $X$ into locally closed sets. We recall that a subset of $X$ is locally closed if it is open in its closure.

\begin{definition}\label{def:1}
We say that $\mathscr{S}$ is a stratification of $X$ if 
$$
\forall i, j \in A,\; \mathscr{X}_i \cap \overline{\mathscr{X}_j} \neq \emptyset \iff \mathscr{X}_i \subset \overline{\mathscr{X}_j}.
$$
In this case, each element $\mathscr{X}_i$ of the partition is called a stratum  and the pair $\left( X, \mathscr{S}\right)$ is what we call a stratified space.
\end{definition}

Throughout the rest of this section, we assume that $\left(X, \mathscr{S}\right)$ is a stratified space. Let us now define a partial order $\preceq$ on the set $A$ as follows. Let $i$ and $j$ be two elements of $A$. We say that $i\preceq j$ if $\mathscr{X}_i \subset \overline{\mathscr{X}_j}$. Furthermore, for $i,j\in A$, we set $i \prec j$ if $i\preceq j$ and $i\neq j$. This partial order on $A$ leads to the notion of depth.

\begin{definition}
The depth of the stratum $\mathscr{X}_i$ (for $i\in A$), denoted by $\mathfrak{d}(\mathscr{X}_i)$, is the supremum over the length $m$ of sequences $(i_0,i_1,\cdots i_m)$ such that 
$$
i=i_0  \succ i_1 \succ \cdots \succ i_m .
$$

Moreover, we define the depth of $\left( X, \mathscr{S}\right)$ as 
$$
\mathfrak{d}\!\left( \left( X, \mathscr{S}\right)\right) = \sup\left\lbrace \mathfrak{d}(\mathscr{X}_i)\mid i \in A \right\rbrace.
$$ 
\end{definition}

Let us point out that the depth of a stratum, and therefore the depth of $\left( X, \mathscr{S}\right)$, might be infinite. However, in the stratifications studied in this paper, the corresponding depths will be finite.

We also define the depth of an element $x\in X$ as the depth of the stratum that contains it and we denote it by $\mathfrak{d}\!\left(x\right)$.

We continue this section with the following definition.
\begin{definition}
We say that a homeomorphism $f:X \rightarrow X$ is an automorphism of the stratified space $\left( X, \mathscr{S}\right)$ if $f$ maps each stratum to a stratum. We denote by $\autstr{X}{\mathscr{S}}$ the group of automorphisms of $\left( X,\mathscr{S}\right)$.
\end{definition}

Elementary examples of stratified spaces are given by  CW-complexes. In particular, any simplicial complex can be viewed as a stratified space.

\section{Stratifications and preliminary results}\label{sec:strat}

As for the decomposition of geodesic laminations into minimal geodesic laminations we recall that any measured lamination $\lambda$ is uniquely decomposed into minimal components as follows:
\begin{equation}\label{eq:decomp}
\lambda =  \sum_{i}\lambda_i +\sum_{j}\alpha_j +\sum_{k}\beta_k;
\end{equation}
where $\lambda_i$ is a minimal measured lamination contained in $\lambda$ which is not a weighted simple closed geodesic, $\alpha_j$ is a peripheral curve associated with some $\lambda_i$ which is contained in  $\lambda$, and $\beta_k$ is a weighted simple closed geodesic  contained in $\lambda$ which is not a peripheral curve.  We used sums in (\ref{eq:decomp}) in the sense that 
$$
\forall \mu\in\mml,\; i\left(\lambda, \mu\right) =  \sum_{i}i\left(\lambda_i, \mu\right) +\sum_{j}i\left(\alpha_j,\mu\right) +\sum_{k}i\left(\beta_k,\mu\right).
$$ 
Therefore, because of topological constraints any measured lamination admits at most $3g-3 +n$ minimal components. By abuse of notation, for two projective measured laminations $\left[\lambda\right]$ and $\left[ \mu\right]$ of disjoint supports, we set $\left| \left[ \lambda \right] + \left[ \mu \right] \right|$ for the (unmeasured) geodesic lamination which is the union of the support of $\left[ \lambda\right]$ with the support of $\left[ \mu\right]$.

We are now ready to introduce a notion of stratification of $\mml$. For any $\lambda\in\mml$, we set 
$$
\mathscr{C}_{\left| \lambda \right|}=\left\lbrace \mu\in\mml  \mid \left| \mu \right| = \left| \lambda \right| \right\rbrace,
$$
and we  call it the \emph{cone} associated  with $\lambda$. It is  known that such a cone is a topological manifold of dimension  at most $3g-3+n$. See \cite{katok,plante,levitt}, and \cite{papadop1}.  Furthermore, it is obvious that the collection of sets $\mathscr{S}=\left\lbrace \mathscr{C}_{\left| \lambda\right|}\right\rbrace_{\left|\lambda\right|\in\umml}$ realises a partition of $\mml$. 

From the definition of a cone associated with a measured  lamination we obtain the following lemma.
\begin{lemme}
Let $\lambda$ and $\mu$ be two measured  laminations. The following properties are equivalent:
\begin{enumerate}
\item $\left| \lambda \right| \subset \left| \mu\right|$,
\item $\mathscr{C}_{\left|\lambda\right|}\bigcap \overline{\mathscr{C}_{\left|\mu\right|}}\neq \emptyset$,
\item $\mathscr{C}_{\left|\lambda\right|} \subset \overline{\mathscr{C}_{\left|\mu\right|}}$.
\end{enumerate}
\end{lemme}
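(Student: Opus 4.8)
The plan is to prove the cycle of implications $(1)\Rightarrow(3)\Rightarrow(2)\Rightarrow(1)$. The implication $(3)\Rightarrow(2)$ is immediate: since $\lambda\in\mathscr{C}_{\left|\lambda\right|}$, the cone $\mathscr{C}_{\left|\lambda\right|}$ is nonempty, so any inclusion $\mathscr{C}_{\left|\lambda\right|}\subset\overline{\mathscr{C}_{\left|\mu\right|}}$ forces the intersection in $(2)$ to be nonempty. Thus the real content lies in the two remaining implications.

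For $(1)\Rightarrow(3)$, assume $\left|\lambda\right|\subset\left|\mu\right|$ and let $\nu$ be an arbitrary element of $\mathscr{C}_{\left|\lambda\right|}$, so that $\left|\nu\right|=\left|\lambda\right|\subset\left|\mu\right|$. The key observation is that $\nu$ and $\mu$ do not cross: since $\left|\nu\right|\subset\left|\mu\right|$ and all leaves of the single geodesic lamination $\left|\mu\right|$ are pairwise disjoint, we have $i\left(\nu,\mu\right)=0$. Consequently, for every $t>0$ the sum $\nu+t\mu$, formed by superimposing the transverse measures, is again a measured lamination, and its support is $\left|\nu\right|\cup\left|\mu\right|=\left|\mu\right|$; that is, $\nu+t\mu\in\mathscr{C}_{\left|\mu\right|}$. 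Using the additivity of the intersection number recorded in $(\ref{eq:decomp})$, one has $i\left(\nu+t\mu,\alpha\right)=i\left(\nu,\alpha\right)+t\,i\left(\mu,\alpha\right)\longrightarrow i\left(\nu,\alpha\right)$ for every $\alpha\in\sS$ as $t\to 0^{+}$, so the path $t\mapsto\nu+t\mu$ converges to $\nu$ in the topology of $\mml$. Hence $\nu\in\overline{\mathscr{C}_{\left|\mu\right|}}$, and since $\nu$ was arbitrary we obtain $\mathscr{C}_{\left|\lambda\right|}\subset\overline{\mathscr{C}_{\left|\mu\right|}}$.

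For $(2)\Rightarrow(1)$, pick $\nu\in\mathscr{C}_{\left|\lambda\right|}\cap\overline{\mathscr{C}_{\left|\mu\right|}}$. My plan is to show that the set $V_{\left|\mu\right|}=\left\lbrace\rho\in\mml\mid\left|\rho\right|\subset\left|\mu\right|\right\rbrace$ is closed in $\mml$; granting this, $\mathscr{C}_{\left|\mu\right|}\subset V_{\left|\mu\right|}$ gives $\overline{\mathscr{C}_{\left|\mu\right|}}\subset V_{\left|\mu\right|}$, whence $\left|\lambda\right|=\left|\nu\right|\subset\left|\mu\right|$, which is $(1)$. To see that $V_{\left|\mu\right|}$ is closed I would use the characterisation
$$
\left|\rho\right|\subset\left|\mu\right|\iff\forall\alpha\in\sS,\;\bigl(i\left(\mu,\alpha\right)=0\Rightarrow i\left(\rho,\alpha\right)=0\bigr),
$$
which expresses $V_{\left|\mu\right|}$ as the intersection, over all $\alpha$ with $i\left(\mu,\alpha\right)=0$, of the sets $\left\lbrace\rho\mid i\left(\rho,\alpha\right)=0\right\rbrace$, each closed by continuity of the intersection number.

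The hard part will be the nontrivial direction of this characterisation: if $\left|\rho\right|\not\subset\left|\mu\right|$ one must produce a \emph{single} $\alpha\in\sS$ disjoint from $\left|\mu\right|$ yet meeting $\left|\rho\right|$ transversally, so that $i\left(\mu,\alpha\right)=0$ while $i\left(\rho,\alpha\right)\neq 0$. I would obtain such an $\alpha$ by working inside the component of $S\setminus\left|\mu\right|$ entered by a leaf of $\left|\rho\right|$ that lies outside $\left|\mu\right|$, the delicate cases being the low-complexity complementary pieces and the situation where the offending minimal component of $\rho$ is itself a simple closed geodesic, which must be detected by a transverse test curve built in the complement of $\left|\mu\right|$. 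One could instead try to argue by lower semicontinuity of the support under a convergent sequence $\mu_k\to\nu$ with $\left|\mu_k\right|=\left|\mu\right|$, but converting positive transverse $\nu$-measure at a point of $\left|\nu\right|$ into a contradiction still requires detecting it by an element of $\sS$ disjoint from $\left|\mu\right|$, so this reduces to the same construction and is where I expect the main effort to go.
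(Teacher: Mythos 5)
Your implications $(3)\Rightarrow(2)$ and $(1)\Rightarrow(3)$ are fine: since $i(\nu,\mu)=0$ whenever $\left|\nu\right|\subset\left|\mu\right|$, the path $t\mapsto \nu+t\mu$ lies in $\mathscr{C}_{\left|\mu\right|}$ and converges to $\nu$ by additivity of the intersection number, exactly as you say. The gap is in $(2)\Rightarrow(1)$: the characterisation
$$
\left|\rho\right|\subset\left|\mu\right|\iff\forall\alpha\in\sS,\;\bigl(i\left(\mu,\alpha\right)=0\Rightarrow i\left(\rho,\alpha\right)=0\bigr)
$$
is false, and the ``single test curve'' you hope to construct does not exist in general. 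If $\mu$ is a minimal measured lamination that fills $S$, then $i(\mu,\alpha)>0$ for \emph{every} $\alpha\in\sS$, so the right-hand side is vacuously satisfied by every $\rho\in\mml$, while $\left|\rho\right|\subset\left|\mu\right|$ certainly is not (take $\rho$ any simple closed geodesic). The same failure occurs whenever $\left|\rho\right|$ meets the interior of the supporting surface $\Sigma(\mu)$ without being contained in $\left|\mu\right|$: every $\alpha$ with $i(\mu,\alpha)=0$ is then either disjoint from $\Sigma(\mu)$ or a peripheral curve of it, and all such $\alpha$ also have $i(\rho,\alpha)=0$. So $V_{\left|\mu\right|}$ is not an intersection of zero sets of intersection numbers, and your route to its closedness collapses precisely at the step you flagged as ``the hard part.''

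The set $V_{\left|\mu\right|}$ \emph{is} closed, but this has to be proved differently: a geodesic lamination has only finitely many sublaminations, and the transverse measures carried by a fixed geodesic lamination form a finite-dimensional closed cone (spanned by the finitely many ergodic measures, or seen via a train track carrying $\left|\mu\right|$); a sequence supported on $\left|\mu\right|$ with converging intersection numbers therefore converges to a measure still supported on $\left|\mu\right|$. This is the content hiding behind the paper's one-line proof, which simply invokes the uniqueness of the decomposition into minimal components; your write-up is much more explicit than the paper's, but the one genuinely nontrivial point --- that nothing outside $\left|\mu\right|$ can appear in the support of a limit --- is exactly the point where your argument, as proposed, would fail.
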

\begin{proof}
This is just a consequence of the uniqueness of the decomposition into minimal components. \end{proof}
The lemma above implies that  the partition $\mathscr{S}$ of $\mml$  defines a stratification of $\mml$ in the sense of Definition \ref{def:1}, the cones being the strata and the partial order on $\umml$ being the inclusion. 
 
 Such a stratification induces a stratification of $\pmml$. Indeed, if for $\left[\lambda\right]\in\pmml$ we set 
$$
\mathrm{P}\mathscr{C}_{\left| \lambda \right|}=\left\lbrace \left[\mu\right]\in\pmml  \mid \left| \mu \right| = \left| \lambda \right| \right\rbrace,
$$
then $\mathrm{P}\!\mathscr{S}=\left\lbrace \mathrm{P}\mathscr{C}_{\left| \lambda\right|}\right\rbrace_{\left|\lambda\right|\in\umml}$ is a stratification of $\pmml$.

We now derive the following elementary observations.
\begin{props}\label{props1} Let $\lambda \in \mml$. \begin{enumerate}
\item If $\lambda$ is minimal, then $\mathfrak{d}\!\left(\lambda\right)=\mathfrak{d}\left( \left[\lambda\right] \right)=0$. The converse is also true.
\item The depths $\mathfrak{d}\!\left(\lambda \right)$ and $\mathfrak{d}\!\left(\left[\lambda\right]\right)$ are equal to at most $3g-4+n$. Furthermore, if $\lambda$ is such that the union of its minimal components defines a generalized pair of pants decomposition of $S$, then $\mathfrak{d}\!\left(\lambda\right)=\mathfrak{d}\!\left( \left[\lambda\right] \right)=3g-4+n$.

Thus, $\mathfrak{d}\!\left(\left( \mml, \mathscr{S}\right) \right)=\mathfrak{d}\!\left(\left( \pmml, \mathrm{P}\!\mathscr{S}\right) \right)=3g-4+n$.

\end{enumerate}
\end{props}

Let us recall that a generalized pair of pants is a hyperbolic surface which is homeomorphic to a domain of the extended complex plane  with three boundary components, where each boundary component is either  a point  or a Jordan curve. 

\begin{proof}
The proofs of all of the statements above follow from the fact that if $\lambda$ is decomposed into $k$ minimal components, then $k-1=\mathfrak{d}\!\left( \lambda\right)=\mathfrak{d}\!\left( \left[\lambda\right]\right)$.
\end{proof}


Let us point out that it is not because the depth of a measured lamination is equal to $3g-4+2n$ than such a measured lamination defines a generalized pair of pants decomposition of $S$. Indeed, if the genus of $S$ is at least $1$, then  there always exists a minimal measured  lamination $\lambda_0$ in $S$ for which $\Sigma\left(\lambda_0\right)$ is of genus $1$ with one (closed) boundary component. We also say that $\lambda_0$ \emph{fills} a once-punctured torus. Let $\lambda_1$ be the boundary component of $\Sigma\left(\lambda_0\right)$. Let $\delta$ be a simple closed geodesic that belongs to the interior of $\Sigma\left(\lambda_0\right)$.  Now, we pick exactly $3g-5+n$ simple closed geodesics $\lambda_i$ ($2\leq i \leq 3g-4+n$) in such a way that $\left\lbrace \delta, \lambda_1 ,\cdots ,\lambda_{3g-4+n}\right\rbrace$ is a (generalized) pair of pants decomposition of $S$. Therefore, if we set $\lambda = \sum_{i=0}^{3g-5+n}\lambda_i$, then $\mathfrak{d}\!\left( \lambda\right)=3g-4+n$ although it is not a (generalized) pair of pants decomposition of $S$. 

\section{A Rigidity result}\label{sec:rigidity}

It is elementary to observe that any element of $\emcg$ induces an element of $\autstr{\pmml}{\mathrm{P}\mathscr{S}}$, and as mentionned in the introduction, we shall prove in this section the converse, namely,  any element of $\autstr{\pmml}{\mathrm{P}\mathscr{S}}$ is induced by an element of $\emcg$. For this purpose we shall  prove that any element of $\autstr{\pmml}{\mathrm{P}\mathscr{S}}$ induces an automorphism of the corresponding complex of curves. We recall that the \emph{complex of curves} of $S$ is a simplicial complex such that for any  $k\geq 0$, the $k$ simplices are $k+1$ pairwise disjoint simple closed geodesics. 

Let us first prove some preliminary results. 

\begin{lemme}\label{lemme:2}
Let $f\in\autstr{\mml}{\mathscr{S}}$ and let $F\in\autstr{\pmml}{\mathrm{P}\mathscr{S}}$. Then $f$ (resp. $F$) maps any minimal  measured   lamination (resp. minimal projective measured  lamination) to a minimal measured  lamination (resp. minimal projective measured  lamination).
\end{lemme}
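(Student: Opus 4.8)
The plan is to reduce the statement to the depth characterization of minimality already established in Properties~\ref{props1}, according to which a measured lamination $\lambda$ (resp. a projective class $[\lambda]$) is minimal if and only if $\mathfrak{d}(\lambda)=0$ (resp. $\mathfrak{d}([\lambda])=0$). Since the depth of a point is by definition the depth of the stratum containing it, and the depth of a stratum is expressed purely in terms of strictly decreasing chains for the partial order $\preceq$, it suffices to show that $f$ (resp. $F$) preserves depth.

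First I would check that an automorphism of the stratified space induces an order isomorphism of the indexing poset $(\umml,\preceq)$. Because $f\in\autstr{\mml}{\mathscr{S}}$ is a homeomorphism, it commutes with the closure operator, that is $f\bigl(\overline{\mathscr{C}_{|\mu|}}\bigr)=\overline{f(\mathscr{C}_{|\mu|})}$; and because $f$ maps each cone onto a cone bijectively (its inverse being again an automorphism of $(\mml,\mathscr{S})$), the relation $\mathscr{C}_{|\lambda|}\subset\overline{\mathscr{C}_{|\mu|}}$ holds if and only if $f(\mathscr{C}_{|\lambda|})\subset\overline{f(\mathscr{C}_{|\mu|})}$. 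Thus the bijection of $\umml$ induced by $f$ is an isomorphism for the inclusion order $\preceq$.

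Any order isomorphism sends a chain $i_0\succ i_1\succ\cdots\succ i_m$ to a chain of the same length, so it preserves the depth of every stratum and hence $\mathfrak{d}(f(x))=\mathfrak{d}(x)$ for all $x\in\mml$. Combining this with the equivalence \emph{minimal} $\iff$ \emph{depth zero} from Properties~\ref{props1} yields that $f$ carries minimal measured laminations exactly to minimal measured laminations. The argument for $F\in\autstr{\pmml}{\mathrm{P}\mathscr{S}}$ is word for word the same, with the projective cones $\mathrm{P}\mathscr{C}_{|\lambda|}$ in place of the cones $\mathscr{C}_{|\lambda|}$. I do not expect a serious obstacle here: the one point deserving care is the passage from ``$f$ permutes the strata'' to ``$f$ preserves $\preceq$'', which rests only on the elementary fact that homeomorphisms commute with closures; once minimality has been recast as the depth-zero condition, the conclusion is formal.
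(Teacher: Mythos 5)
Your proposal is correct and follows exactly the paper's argument: the paper's proof is the one-line observation that $f$ and $F$ preserve the depth of strata, combined (implicitly) with the depth-zero characterization of minimality from Properties~\ref{props1}. You have simply spelled out the details the paper leaves implicit, namely that a homeomorphism permuting strata commutes with closure and hence induces an order isomorphism of $(\umml,\preceq)$, which is a worthwhile clarification but not a different route.
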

\begin{proof}
This is just because such  mappings $f$ and $F$ preserve the depth of  strata.
\end{proof}

\begin{prop}\label{lemme:disjoint}
Let $f\in\autstr{\mml}{\mathscr{S}}$ and let $F\in\autstr{\pmml}{\mathrm{P}\mathscr{S}}$. Let $\lambda$ and $\mu$ be two measured  laminations with disjoint supports. Then, $f\left( \lambda\right)$ (resp. $F\left( \left[\lambda\right]\right)$) and $f\left(\mu\right)$ (resp. $F\left( \left[\mu\right]\right)$) have disjoint supports.
\end{prop}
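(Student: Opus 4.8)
The plan is to reduce the statement to a purely order-theoretic property of the index set of the stratification, and then to exploit the fact that $f$ (resp.\ $F$) induces an isomorphism of the associated partially ordered set. First I would record that any $f\in\autstr{\mml}{\mathscr{S}}$ permutes the strata, and so determines a bijection $\phi$ of the index set $\umml$ through the rule $f\left(\mathscr{C}_{|\lambda|}\right)=\mathscr{C}_{\phi(|\lambda|)}$. Since $f$ is a homeomorphism it satisfies $f(\overline{A})=\overline{f(A)}$ for every subset $A$, so that $\mathscr{C}_{|\lambda|}\subseteq\overline{\mathscr{C}_{|\mu|}}$ if and only if $\mathscr{C}_{\phi(|\lambda|)}\subseteq\overline{\mathscr{C}_{\phi(|\mu|)}}$. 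Combining this with the Lemma stating that $|\lambda|\subseteq|\mu|$ holds precisely when $\mathscr{C}_{|\lambda|}\subseteq\overline{\mathscr{C}_{|\mu|}}$, we deduce that $\phi$ is an isomorphism of the poset $\left(\umml,\subseteq\right)$. Moreover, by Lemma \ref{lemme:2}, $\phi$ carries minimal laminations to minimal laminations, that is, it preserves the atoms of this poset. Because the strata $\mathrm{P}\mathscr{C}_{|\lambda|}$ are indexed by the same poset $\umml$, the identical discussion applies to $F$ on $\pmml$.

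Next I would characterise disjointness of supports using $\left(\umml,\subseteq\right)$ alone. By the decomposition (\ref{eq:decomp}), the support of a measured lamination is the disjoint union of its minimal components, and these components are exactly the atoms of the poset lying below it. The crucial observation is that two \emph{distinct} minimal laminations $a$ and $b$ have disjoint supports if and only if they admit a common upper bound in $\umml$. Indeed, if they possess a common upper bound $\nu$, then $a$ and $b$ consist of disjoint leaves of the geodesic lamination $\nu$; conversely, if their leaves do not cross, minimality forces them to share no leaf, since a common leaf would be dense in both and would thus force $a=b$, so their supports are disjoint and $a\cup b$ again supports a measured lamination. Granting this, $|\lambda|$ and $|\mu|$ are disjoint if and only if \textbf{(i)} they have no common minimal component, and \textbf{(ii)} every minimal component of $|\lambda|$ and every minimal component of $|\mu|$ admit a common upper bound in $\umml$.

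To conclude, I would note that both conditions \textbf{(i)} and \textbf{(ii)} are phrased entirely through the partial order: the sets of atoms lying below $|\lambda|$ and below $|\mu|$, their being disjoint, and the existence of common upper bounds for pairs of atoms. An order isomorphism preserving atoms, such as $\phi$, preserves all of these data. Hence $\phi(|\lambda|)=|f(\lambda)|$ and $\phi(|\mu|)=|f(\mu)|$ again satisfy \textbf{(i)} and \textbf{(ii)}, which is exactly the assertion that $f(\lambda)$ and $f(\mu)$ have disjoint supports. The case of $F$ on $\pmml$ is treated word for word.

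I expect the main obstacle to be the equivalence used in the second step, namely upgrading the order-theoretic condition of admitting a common upper bound (non-crossing) to genuine \emph{disjointness} of the supports. This is where the topological input is needed: one must use that the support of a measured lamination contains no isolated spiralling leaves but is precisely the union of its minimal components, together with the density of leaves in a minimal lamination, in order to rule out two distinct atoms sharing a leaf. Once this input is secured, everything else follows formally from $\phi$ being an order isomorphism that preserves atoms.
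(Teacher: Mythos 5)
Your argument is correct, but it takes a genuinely different route from the paper's, and in fact a more careful one. The paper's own proof is two lines: it asserts that disjointness of $\left|\lambda\right|$ and $\left|\mu\right|$ is equivalent to $\overline{\mathscr{C}_{\left|\lambda\right|}}\cap\overline{\mathscr{C}_{\left|\mu\right|}}=\emptyset$, applies $f$ to both closures, and concludes. You instead transport the problem to the poset $\left(\umml,\subseteq\right)$, on which $f$ induces an order isomorphism preserving atoms, and you characterise disjointness of supports by two order-theoretic conditions: no common atom, and existence of a common upper bound for every pair of atoms. The difference is substantive, because the equivalence the paper leans on is delicate: the closure of a cone is the union of the cones of all sublaminations, so the cone closures of two laminations whose supports \emph{cross} transversally also meet only in the stratum of the empty lamination; hence an (essentially) empty intersection of cone closures does not by itself distinguish ``disjoint'' from ``crossing'' for the images $f\left(\lambda\right)$ and $f\left(\mu\right)$. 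What does distinguish them is exactly your common upper bound, i.e.\ the existence of the stratum $\mathscr{C}_{\left|\lambda+\mu\right|}$ whose closure contains both $\mathscr{C}_{\left|\lambda\right|}$ and $\mathscr{C}_{\left|\mu\right|}$, combined with the fact that two distinct minimal sublaminations of a single geodesic lamination are automatically disjoint (your density-of-leaves argument is the right topological input there, and condition \textbf{(i)} is genuinely needed to rule out the images sharing a minimal component). So your proof supplies precisely the step the paper's proof elides; the price is the longer order-theoretic setup, but each ingredient you use (the Lemma identifying $\subseteq$ with closure containment, Lemma \ref{lemme:2} for atoms) is already available in the paper.
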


\begin{proof}
Let $f$, $F$, $\lambda$, and $\mu$ be as in the statement.  Then, we have $\overline{\mathscr{C}_{\left| \lambda\right|}} \bigcap \overline{\mathscr{C}_{\left| \mu\right|}} = \emptyset$. Since $f\left(\mathscr{C}_{\left| \lambda\right|} \right) = \mathscr{C}_{\left| f\left( \lambda\right)\right|}$ and $f\left(\mathscr{C}_{\left| \mu\right|} \right) = \mathscr{C}_{\left| f\left( \mu\right)\right|}$, we deduce that $\overline{\mathscr{C}_{\left| f\left( \delta\right)\right|}}\bigcap \overline{\mathscr{C}_{\left| f\left( \mu\right)\right|}}=\emptyset$, meaning that $f\left(\lambda\right)$ and $f\left( \mu\right)$  have disjoint supports. We prove in the same way that  $F\left( \left[ \lambda\right]\right)$ and $F\left( \left[ \mu\right]\right)$ have disjoint supports.
\end{proof}

Let us see a direct consequence of this property.

\begin{corollaire}\label{cor:disjoint}
Let $f\in\autstr{\mml}{\mathscr{S}}$ and let $F\in\autstr{\pmml}{\mathrm{P}\mathscr{S}}$. Let $\lambda$ and $\mu$ be two measured  laminations with disjoint supports. Then, $f\left( \mathscr{C}_{\left| \lambda + \mu \right|} \right) = \mathscr{C}_{\left| f\left(  \lambda \right)+f\left( \mu\right)\right|}$ and $F\left( \mathrm{P}\mathscr{C}_{\left| \lambda + \mu \right|} \right) = \mathrm{P}\mathscr{C}_{\left| F\left(  \left[\lambda\right] \right)+F\left( \left[\mu\right]\right)\right|}$.
\end{corollaire}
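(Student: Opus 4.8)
The plan is to prove the equality of cones by establishing the equality of the corresponding supports, and the two tools I would lean on are both already available. First, as observed in the proof of Proposition~\ref{lemme:disjoint}, an automorphism $f$ of $\left(\mml,\mathscr{S}\right)$ sends the stratum $\mathscr{C}_{\left|\sigma\right|}$ onto $\mathscr{C}_{\left|f(\sigma)\right|}$ for every measured lamination $\sigma$. Second, by the equivalence lemma above, inclusion of supports is detected by the closure order on strata, i.e. $\left|\sigma\right|\subset\left|\tau\right|$ if and only if $\mathscr{C}_{\left|\sigma\right|}\subset\overline{\mathscr{C}_{\left|\tau\right|}}$. Since $f$ is a homeomorphism it commutes with closures and preserves inclusions, hence it transports this order. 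I would write $f\!\left(\mathscr{C}_{\left|\lambda+\mu\right|}\right)=\mathscr{C}_{\left|\nu\right|}$ for the image stratum and aim to show $\left|\nu\right|=\left|f(\lambda)+f(\mu)\right|$, which is exactly the assertion; the projective statement is then verbatim the same with $F$, the cones $\mathrm{P}\mathscr{C}$ and projective classes.

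For the inclusion $\left|f(\lambda)+f(\mu)\right|\subset\left|\nu\right|$ I would argue directly. From $\left|\lambda\right|\subset\left|\lambda+\mu\right|$ the lemma gives $\mathscr{C}_{\left|\lambda\right|}\subset\overline{\mathscr{C}_{\left|\lambda+\mu\right|}}$; applying $f$ and using that it commutes with closures yields $\mathscr{C}_{\left|f(\lambda)\right|}\subset\overline{\mathscr{C}_{\left|\nu\right|}}$, whence $\left|f(\lambda)\right|\subset\left|\nu\right|$, and symmetrically $\left|f(\mu)\right|\subset\left|\nu\right|$. Because Proposition~\ref{lemme:disjoint} guarantees that $f(\lambda)$ and $f(\mu)$ still have disjoint supports, the lamination $\left|f(\lambda)+f(\mu)\right|=\left|f(\lambda)\right|\cup\left|f(\mu)\right|$ is well defined and is contained in $\left|\nu\right|$.

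The reverse inclusion $\left|\nu\right|\subset\left|f(\lambda)+f(\mu)\right|$ is where I expect the only real obstacle, since a priori the image stratum could correspond to a support carrying extra minimal components. The cleanest remedy is to run the previous paragraph for the inverse automorphism $f^{-1}$ applied to the disjoint pair $a:=f(\lambda)$, $b:=f(\mu)$: setting $f^{-1}\!\left(\mathscr{C}_{\left|a+b\right|}\right)=\mathscr{C}_{\left|\rho\right|}$ and noting $\left|f^{-1}(a)\right|=\left|\lambda\right|$, $\left|f^{-1}(b)\right|=\left|\mu\right|$, the same computation gives $\left|\lambda+\mu\right|\subset\left|\rho\right|$. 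Feeding this back through $f$ — via $\mathscr{C}_{\left|\lambda+\mu\right|}\subset\overline{\mathscr{C}_{\left|\rho\right|}}$ and hence $\mathscr{C}_{\left|\nu\right|}\subset\overline{\mathscr{C}_{\left|a+b\right|}}$ — the lemma delivers $\left|\nu\right|\subset\left|a+b\right|=\left|f(\lambda)+f(\mu)\right|$. Combining the two inclusions gives $\left|\nu\right|=\left|f(\lambda)+f(\mu)\right|$, that is $\mathscr{C}_{\left|\nu\right|}=\mathscr{C}_{\left|f(\lambda)+f(\mu)\right|}$, as desired.

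An alternative to the $f^{-1}$ trick, if one prefers a more structural argument, is a depth count: by Properties~\ref{props1} the depth of a cone is one less than its number of minimal components, $f$ preserves depth, and this number is additive over disjoint supports, so $\left|\nu\right|$ and $\left|f(\lambda)+f(\mu)\right|$ carry equally many minimal components; an inclusion of supports of measured laminations with the same finite number of minimal components is forced to be an equality, because each minimal sublamination of the smaller support lies in a unique minimal component of the larger one and equal counts then exhaust them. Either route closes the argument, the forward inclusion and the transport of the order relation being purely formal, and the projective case runs identically with $F$ in place of $f$.
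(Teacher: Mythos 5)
Your proposal is correct, and your ``alternative'' depth-count route is precisely the paper's own proof: the paper obtains the forward inclusion $\left| f\left(\lambda\right)\right| \cup \left| f\left(\mu\right)\right| \subset \left| f\left(\delta\right)\right|$ from the closure relations of the cones and then concludes equality of supports from $\mathfrak{d}\!\left( f\left( \delta\right) \right) = \mathfrak{d}\!\left( \nu \right)$, i.e.\ from equality of the (finite) number of minimal components. Your primary route, which replaces the depth count by running the same forward inclusion for $f^{-1}$ on the disjoint pair $f\left(\lambda\right), f\left(\mu\right)$, is a valid minor variant reaching the same conclusion.
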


\begin{proof}
Let $f$, $F$, $\lambda$, and $\mu$ be as above. We set $\delta  = \lambda + \mu$. Because of Property \ref{lemme:disjoint}, we can set $\nu = f\left( \lambda\right)+f\left( \mu \right)$. We then have to show that $\left| f\left( \delta\right)\right|= \left| \nu \right|$.  One  has that both cones $\mathscr{C}_{\left| f\left(\lambda\right)\right|}$ and $\mathscr{C}_{\left| f\left(\mu\right)\right|}$ are contained in $\overline{\mathscr{C}_{\left| \nu\right|}}$ and in $\overline{\mathscr{C}_{\left| f\left( \delta\right)\right|}}$. This implies that $\left| f\left( \delta\right) \right|$ contains $\left| f\left(\lambda\right) \right|$ and $\left| f\left( \mu\right) \right|$. Furthermore, because $f$ preserves the depth of strata, $\mathfrak{d}\!\left(  f\left( \delta\right) \right) = \mathfrak{d}\!\left( \nu \right)$. Thus, $\left| f\left( \delta\right)\right|= \left| \nu \right|$. Using the same steps, we prove $F\left( \mathrm{P}\mathscr{C}_{\left| \lambda + \mu \right|} \right) = \mathrm{P}\mathscr{C}_{\left| F\left(  \left[\lambda\right] \right)+F\left( \left[\mu\right]\right)\right|}$.
\end{proof}

Another consequence of Property \ref{lemme:disjoint} is the following:

\begin{corollaire}\label{cor:key}
Let $f\in\autstr{\mml}{\mathscr{S}}$ and let $F\in\autstr{\pmml}{\mathcal{P}\mathscr{S}}$.  Let $\alpha$ be a a weighted simple closed geodesic such that its support is a  peripheral curve associated with some minimal measured lamination $\lambda$. Then, $f\left( \alpha\right)$ (resp. the support of $F\left(\left| \alpha\right| \right)$) is a peripheral curve associated with $f(\lambda)$ (resp. the minimal measured lamination corresponding to $F(\left| \lambda\right| )$).
\end{corollaire}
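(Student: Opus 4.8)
The plan is to express the defining property of a peripheral curve purely through the vanishing and non-vanishing of geometric intersection numbers, and then to transport that property across $f$ (resp. $F$) using that it preserves disjointness of supports in both directions. Recall from the introduction that, for a minimal lamination $\lambda$ which is not a weighted simple closed geodesic, a simple closed geodesic disjoint from $\lambda$ is peripheral to $\lambda$ exactly when every measured lamination meeting it also meets $\lambda$. Since $i\left(\delta,\eta\right)=0$ is equivalent to $\left|\delta\right|$ and $\left|\eta\right|$ having disjoint supports, the hypothesis on $\alpha$ reads: (i) $\left|\alpha\right|$ and $\left|\lambda\right|$ are disjoint, and (ii) every $\mu$ with $i\left(\alpha,\mu\right)\neq 0$ has $i\left(\lambda,\mu\right)\neq 0$.

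First I would note that, as $\autstr{\mml}{\mathscr{S}}$ is a group, $f^{-1}$ is again an automorphism, so Property \ref{lemme:disjoint} applies to both $f$ and $f^{-1}$; hence $\left|\delta\right|$ and $\left|\eta\right|$ are disjoint if and only if $\left|f(\delta)\right|$ and $\left|f(\eta)\right|$ are. Applying this equivalence to (i) gives that $\left|f(\alpha)\right|$ and $\left|f(\lambda)\right|$ are disjoint. For (ii) I would take an arbitrary $\nu$ with $i\left(f(\alpha),\nu\right)\neq 0$, write $\nu=f(\mu)$ by surjectivity, pull the meeting back through $f^{-1}$ to obtain $i\left(\alpha,\mu\right)\neq 0$, apply (ii) for $\lambda$ to get $i\left(\lambda,\mu\right)\neq 0$, and push forward through $f$ to conclude $i\left(f(\lambda),\nu\right)\neq 0$. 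Thus the pair $\left(f(\alpha),f(\lambda)\right)$ satisfies (i) and (ii); moreover $f(\alpha)\neq f(\lambda)$ because their preimages lie in distinct cones, and by Lemma \ref{lemme:2} the lamination $f(\lambda)$ is minimal.

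The step I expect to be the main obstacle is checking that this intersection-theoretic condition genuinely identifies $f(\alpha)$ as a peripheral curve, i.e. that $f(\alpha)$ is a simple closed geodesic and that $f(\lambda)$ is not a weighted simple closed geodesic; indeed, the stratification by itself does not separate a simple closed geodesic from a uniquely ergodic lamination filling a complexity-one subsurface, so depth alone will not suffice. I would dispose of both points using (ii) together with the existence of interior crossing curves. If $f(\alpha)$ were not a simple closed geodesic it would fill a subsurface of positive complexity, whose interior contains a simple closed geodesic $\nu$ crossing $f(\alpha)$; since $f(\lambda)$ is disjoint from $f(\alpha)$ it lies off the interior of that subsurface, so $i\left(f(\lambda),\nu\right)=0$, contradicting (ii). Symmetrically, once $f(\alpha)$ is known to be a curve, if $f(\lambda)$ were a simple closed geodesic then, the two curves being disjoint and distinct and the complexity $3g-3+n$ of $S$ being at least $2$, one finds a simple closed geodesic $\nu$ crossing $f(\alpha)$ inside a complexity-one neighbourhood of $f(\alpha)$ missing $f(\lambda)$, again contradicting (ii). Hence $f(\lambda)$ is a minimal lamination that is not a weighted simple closed geodesic, $f(\alpha)$ is a simple closed geodesic, and by the characterisation recalled above $f(\alpha)$ is a peripheral curve associated with $f(\lambda)$. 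The statement for $F$ follows by the identical argument, replacing each lamination by its projective class, $f$ by $F$, and Property \ref{lemme:disjoint} by its statement for $F$.
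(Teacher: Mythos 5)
Your proof is correct and follows essentially the same route as the paper's: you transport the intersection-number characterisation of peripheral curves through $f$ by applying Property \ref{lemme:disjoint} to both $f$ and $f^{-1}$. The only difference is that you additionally verify that $f(\alpha)$ is genuinely a simple closed geodesic and that $f(\lambda)$ is not one, steps the paper absorbs into its appeal to the characterisation of peripheral curves stated in the introduction; this extra care is welcome but does not change the argument.
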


\begin{proof}
Let $f$, $\alpha$, and $\lambda$ be as in the statement. Let $\mu\in\mml$ be such that $i\left( \alpha, \mu \right)>0$. Then, $\left| f(\alpha) \right| \cap \left| \mu \right| \neq \emptyset$. Therefore, $\left| \alpha \right| \cap \left| f^{-1}(\mu)\right| \neq \emptyset$ because otherwise by Property \ref{lemme:disjoint} we would  have  $\left| f(\alpha) \right| \cap \left| \mu \right| = \emptyset$. Using the fact that $\left| \alpha \right|$ is a peripheral curve, we have $i(\lambda, f^{-1}(\mu))>0$, that is,  $\left| \lambda \right| \cap \left| f^{-1}(\mu)\right| \neq \emptyset$, which, by using once again Property \ref{lemme:disjoint}, implies that $\left| f(\lambda) \right| \cap \left| \mu\right| \neq \emptyset$. This proves that $f(\alpha)$ is a peripheral curve associated with $f(\lambda)$.
\end{proof}

Such a corollary will be the key for proving:

\begin{lemme}\label{lemma:key}
Let $f\in\autstr{\mml}{\mathscr{S}}$ and let $F\in\autstr{\pmml}{\mathcal{P}\mathscr{S}}$. Then, $f\left( \wscc \right) = \wscc$ and $F\left( \mathcal{S} \right) = \mathcal{S} $.
\end{lemme}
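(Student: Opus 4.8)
My plan is to isolate an intrinsic property, expressible purely through the partial order and the disjointness relation, that singles out weighted simple closed geodesics among all \emph{minimal} measured laminations, and which the previous results show to be preserved by $f$ and $F$. The property I would use is the following: a minimal measured lamination $\lambda$ lies in $\wscc$ if and only if its support is a peripheral curve associated with some minimal measured lamination $\mu$ whose support is disjoint from $\left|\lambda\right|$. Since, by Lemma~\ref{lemme:2}, $f$ and $F$ map minimal laminations to minimal laminations, it suffices to work inside the minimal stratum.

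\emph{Soundness of this description.} One implication is immediate: by definition a peripheral curve associated with $\mu$ is a boundary component of the supporting surface $\Sigma(\mu)$, hence a simple closed geodesic; so any minimal lamination that occurs as a peripheral curve automatically belongs to $\wscc$, and in particular no minimal lamination filling a subsurface can occur this way. The content is therefore the converse, which is the geometric input of the proof: under the standing assumption $(g,n)\neq(0,3),(0,4),(1,1)$, every simple closed geodesic $\alpha$ is a peripheral curve of some minimal lamination. To see this I would produce an essential subsurface $\Sigma$ having $\alpha$ among its boundary components and with $\Sigma$ neither an annulus nor a pair of pants; such a $\Sigma$ carries a filling minimal measured lamination $\mu$, and for this $\mu$ the curve $\alpha$ is, by construction, a peripheral curve. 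The existence of $\Sigma$ is a short case analysis according to whether $\alpha$ is separating or non-separating, using that the complexity $3g-3+n$ is at least $2$ once the three exceptional topological types are removed.

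\emph{Invariance and conclusion.} Fix $f\in\autstr{\mml}{\mathscr{S}}$ and $\alpha\in\wscc$. By the soundness step there is a minimal lamination $\mu$, disjoint from $\alpha$, for which $\alpha$ is peripheral. By Lemma~\ref{lemme:2} the image $f(\mu)$ is again minimal, and by Corollary~\ref{cor:key} the lamination $f(\alpha)$ is a peripheral curve associated with $f(\mu)$; hence $f(\alpha)\in\wscc$. This proves $f\left(\wscc\right)\subseteq\wscc$. Applying the same argument to $f^{-1}\in\autstr{\mml}{\mathscr{S}}$ yields the reverse inclusion, so $f\left(\wscc\right)=\wscc$. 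The statement for $F$ follows by running the identical argument in $\pmml$: each $\left[\alpha\right]\in\mathcal{S}$ is a single projective point, its support is peripheral to a minimal lamination by the same geometric lemma, and the projective form of Corollary~\ref{cor:key} gives $F(\mathcal{S})\subseteq\mathcal{S}$, with equality again coming from $F^{-1}$.

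\emph{Main obstacle.} The only non-formal ingredient is the geometric lemma that every essential simple closed geodesic bounds a subsurface that is neither a pair of pants nor an annulus, and I expect this to be where care is needed. It is precisely here that the excluded surfaces intervene: on a once-punctured torus or a four-times-punctured sphere a minimal non-closed lamination fills the whole surface, so no simple closed geodesic is peripheral to any minimal lamination, and the characterization — hence the lemma — genuinely fails for those $(g,n)$.
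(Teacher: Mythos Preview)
Your argument is correct, and it is a genuine variant of the paper's proof. Both proofs hinge on Corollary~\ref{cor:key}, but they run it in opposite directions. The paper argues by contradiction: assuming $f(\alpha)$ is minimal but not a weighted simple closed geodesic (and not maximal, which follows since $\alpha$ is not maximal), it picks a peripheral curve $\beta$ of $f(\alpha)$ and applies Corollary~\ref{cor:key} to $f^{-1}$ to conclude that $f^{-1}(\beta)$ is peripheral to $\alpha$, which is absurd for a simple closed geodesic. Your proof instead works directly with $f$: you first manufacture a minimal lamination $\mu$ for which $\alpha$ is peripheral, and then Corollary~\ref{cor:key} forces $f(\alpha)$ to be a peripheral curve of $f(\mu)$, hence a simple closed geodesic. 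The paper's route is slightly more economical, since the implication ``minimal, non-closed, non-maximal $\Rightarrow$ has a peripheral curve'' is immediate from the definition of the supporting surface, whereas your converse ``simple closed $\Rightarrow$ is a peripheral curve of something'' requires the short topological case analysis you outline. On the other hand, your approach makes the role of the excluded types $(0,4)$ and $(1,1)$ completely transparent: they are exactly the surfaces on which no simple closed geodesic bounds a non-pants subsurface, so your characterization (and hence your proof) genuinely breaks there, as you note. The paper also records two further alternatives---a dimension count on the sets $\mathcal{U}_\lambda$ and, in Remark~\ref{rmk:ohshika}, an argument via pairs of maximal laminations---so your proof adds a fourth, equally valid, path to the same lemma.
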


\begin{proof}
Let $f$ and $F$ be as in the statement. Since $f$ and $F$ are bijective, it is enough to prove that $f\left( \wscc \right) \subset \wscc$ and $F\left( \mathcal{S} \right) \subset \mathcal{S} $. 

Let $\alpha \in \wscc$. By Lemma \ref{lemme:2}, $f(\alpha)$ is a minimal measured  lamination. Furthermore, $f(\alpha)$ is not maximal because otherwise $\alpha$ would be maximal.  Let us prove that $f(\alpha)$ is a weighted simple closed geodesic. For the sake of contradiction one assumes that it is not a weighted simple closed geodesic. Thus, there exists a weighted simple closed geodesic $\beta$ which corresponds to a peripheral curve associated with $f(\alpha)$. Since $f^{-1}\in\autstr{\mml}{\mathscr{S}}$, by Corollary \ref{cor:key} one can say that the support of $f^{-1}(\beta)$ is a peripheral curve associated with $\alpha$, which is a contradiction since $\alpha$ is a weighted simple closed geodesic. Therefore, $f\left( \wscc\right)\subset \wscc$. 

The proof of $F\left( \mathcal{S}\right)\subset \mathcal{S}$ is identical.
\end{proof}

One can also use the same strategy as in \cite{papadop&ohshika2} in order to prove Corollary \ref{cor:key}. Indeed, let us set for any $\lambda \in \mml$, $\mathcal{U}_{\lambda}=\left\lbrace  \mu\in\mml \mid \left| \lambda \right| \cap \left| \mu \right| = \emptyset\right\rbrace$. Therefore, if $\alpha\in \wscc$ then $\mathcal{U}_{\alpha}$ is homeomorphic to a space of dimension $6g-8+2n$. This is because if $\left| \alpha \right|$ is not a separating curve, that is, if $S\setminus \left| \alpha \right|$ is connected, then  $\mathcal{U}_{\alpha}$ is homeomorphic to the  measured  lamination space of a hyperbolic surface of genus $g-1$ with $n+2$ cusps. On the other hand, if $\alpha$ is a separating curve, then $\mathcal{U}_{\alpha}$ is homeomorphic to the Cartesian product of the measure lamination space of a surface of genus $g_1$  with $n_1 + 1$ cusps and  the measured  lamination space of a surface of genus $g_2$ with  $n_2+1$ cusps such that $g_1+g_2=g$ and $n_1+n_2=n$. Thus, $\mathcal{U}_{\alpha}$ is also of dimension $6g-8+2n$. Furthermore, one has that if $\lambda$ is a minimal measured  lamination which is neither a weighted simple closed geodesic nor a maximal measured lamination, then $\mathcal{U}_{\lambda}$ is homeomorphic to a space  of dimension $6\widetilde{g}-12+2\widetilde{n}+3p$, where $g-\widetilde{g}$, $n-\widetilde{n}$, and $p$ are respectively the genus, the number of cusps, and the number of boundary components of the supporting  surface $\Sigma\left( \lambda \right)$.  Such a dimension is less than $6g-8+2n$.  Therefore, for any $\lambda\in\mml$, one has that $\mathcal{U}_{\lambda}$ is of dimension less than or equal to $6g-8+2n$, and equality holds if and only if $\lambda \in\wscc$. Having this in mind and Property \ref{lemme:disjoint} it becomes elementary to prove that elements of $\autstr{\mml}{\mathscr{S}}$ (resp. $\autstr{\pmml}{\mathcal{P}\mathscr{S}}$) preserve $\wscc$ (resp. $\mathcal{S}$). 

\begin{remark}\label{rmk:ohshika}
Let us give a third way to prove Lemma \ref{lemma:key} which does not involve the topological dimension of a particular space.  The author wants to thank Ken'ichi Oshika for having pointed out this alternative to him. 

Let us start by assuming  that the genus $g$ of $S$ is greater than $1$. Otherwise the following observation is not working.  If $\lambda$ is a minimal measured  lamination which is neither a weighted simple closed geodesic nor a maximal measured  lamination, then for any two maximal measured  laminations $\mu_1$ and $\mu_2$ containing  $\lambda$ as a sublamination we have 
$$
\left( \left| \mu_1 \right| \setminus \left| \lambda \right| \right) \cap \left( \left| \mu_2 \right| \setminus \left| \lambda \right| \right) \neq \emptyset.
$$
This is obvious since any maximal measured  lamination containing $\lambda$ has to contain also the associated peripheral curve(s). And conversely, if a minimal measured   lamination which is not maximal statisfies such a property, then it cannot be a simple closed geodesic. 

Let us now see why this implies that any element of $\autstr{\mml}{\mathscr{S}}$ preserves the set $\wscc$.  Let $f\in\autstr{\mml}{\mathscr{S}}$ and let $\alpha \in \wscc$.  Let $\mu_1$ and $\mu_2$ be two maximal measured laminations having $\alpha$ as minimal component and such that $\left( \left| \mu_1 \right| \setminus \left| \alpha \right| \right) \cap \left( \left| \mu_2 \right| \setminus \left| \alpha \right| \right) = \emptyset$. By  Property \ref{lemme:disjoint}, $\left( \left| f\left( \mu_1 \right) \right| \setminus \left| f\left(\alpha \right) \right| \right) \cap \left( \left| f\left(\mu_2 \right) \right| \setminus \left| f\left( \alpha \right) \right| \right) = \emptyset$, and since $f$ preserves the depth of strata we have that $f\left( \mu_1 \right)$ and $f\left( \mu_2 \right)$ are two maximal measured laminations which have by Lemma \ref{lemme:2} the measured lamination $f\left( \alpha\right)$ as minimal component. Therefore, $f\left( \alpha \right)$  has to be a weighted simple closed geodesic and we then proved that $f$ preserves $\wscc$. 
\end{remark}

We are now ready to state and prove the main theorem.

\begin{theo}\label{main-theo}
If $(g,n)\neq (2,0), (1,2)$, then $\autstr{\pmml}{\mathrm{P}\!\mathscr{S}} \simeq \emcg$; while if $(g,n)=(2,0)$ or $(g,n)=(1,2)$, then $\autstr{\pmml}{\mathrm{P}\!\mathscr{S}} \simeq \quot{\emcg}{\quot{\mathbb{Z}}{2\mathbb{Z}}}$.
\end{theo}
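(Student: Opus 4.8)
The plan is to study the natural homomorphism $\rho\colon\emcg\longrightarrow\autstr{\pmml}{\mathrm{P}\!\mathscr{S}}$ induced by the action of the extended mapping class group, to prove that it is surjective, and to compute its kernel. That $\rho$ is well defined has already been observed: every element of $\emcg$ preserves the stratification. The whole statement then amounts to identifying the image and the kernel of $\rho$, and the bridge to the rigidity of the complex of curves is the claim that every $F\in\autstr{\pmml}{\mathrm{P}\!\mathscr{S}}$ restricts to a simplicial automorphism of $\ccs$.

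First I would verify this last claim. By Lemma~\ref{lemma:key}, $F$ maps $\mathcal{S}$ bijectively onto $\mathcal{S}$, hence permutes the vertices of $\ccs$; and by Property~\ref{lemme:disjoint}, $F$ sends laminations with disjoint supports to laminations with disjoint supports, so it carries any pair of disjoint simple closed geodesics to a disjoint pair, as does $F^{-1}$. Since $\ccs$ is a flag complex --- a finite set of its vertices spans a simplex exactly when the corresponding geodesics are pairwise disjoint --- it follows that $F$ and $F^{-1}$ send simplices to simplices, so $F$ induces an element $\Phi(F)\in\mathrm{Aut}(\ccs)$. At this point I would invoke the theorem of Ivanov, Korkmaz and Luo \cite{ivanov,korkmaz,luo}: the natural map $\emcg\to\mathrm{Aut}(\ccs)$ is an isomorphism whenever $(g,n)\neq(2,0),(1,2)$, while in these two exceptional cases it is surjective with kernel $\mathbb{Z}/2\mathbb{Z}$ generated by the class of a distinguished involution $\iota$ that acts trivially on $\ccs$. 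In particular $\Phi(F)$ is realised by some $\phi\in\emcg$.

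The key remaining point is that an automorphism of $\pmml$ preserving $\mathrm{P}\!\mathscr{S}$ is completely determined by its restriction to $\mathcal{S}$. This is immediate from two facts recalled in the introduction: $\mathcal{S}$ is dense in $\pmml$, and $\pmml$ is Hausdorff, being homeomorphic to $\mathbb{S}^{6g-7+2n}$. Indeed, two continuous self-maps of a Hausdorff space that agree on a dense subset coincide everywhere. Applying this to $\rho(\phi)$ and $F$ --- which by construction induce the same automorphism of $\ccs$ and therefore agree on every vertex, i.e. on all of $\mathcal{S}$ --- gives $\rho(\phi)=F$, so $\rho$ is surjective. The same density argument computes $\ker\rho$: a class $\phi$ lies in $\ker\rho$ if and only if $\rho(\phi)$ fixes $\mathcal{S}$ pointwise, if and only if $\phi$ acts trivially on $\ccs$. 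By Ivanov--Korkmaz--Luo this kernel is trivial when $(g,n)\neq(2,0),(1,2)$, and equals $\langle\iota\rangle\simeq\mathbb{Z}/2\mathbb{Z}$ otherwise; note that $\iota$ does act trivially on all of $\pmml$, again because it fixes the dense set $\mathcal{S}$ pointwise. Combining surjectivity with this kernel computation yields $\autstr{\pmml}{\mathrm{P}\!\mathscr{S}}\simeq\emcg$ in the generic case and $\autstr{\pmml}{\mathrm{P}\!\mathscr{S}}\simeq\emcg/(\mathbb{Z}/2\mathbb{Z})$ in the two exceptional cases.

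The genuinely delicate bookkeeping, and what I expect to be the main obstacle, is the treatment of the exceptional surfaces: one must match the kernel of $\emcg\to\mathrm{Aut}(\ccs)$ with the kernel of $\rho$, which requires checking that the involution witnessing the $\mathbb{Z}/2\mathbb{Z}$ in the complex-of-curves rigidity theorem acts trivially not merely on $\ccs$ but on the entire sphere $\pmml$. This is exactly the place where the density of $\mathcal{S}$ in $\pmml$ is indispensable. Everything else reduces to the preliminary results already established in this section together with a direct appeal to \cite{ivanov,korkmaz,luo}.
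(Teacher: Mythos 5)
Your overall architecture is the same as the paper's (pass to the induced automorphism of $\ccs$, invoke Ivanov--Korkmaz--Luo, and use density of $\mathcal{S}$ in the Hausdorff space $\pmml$ to upgrade agreement on $\mathcal{S}$ to agreement everywhere), and the generic case and the case $(g,n)=(2,0)$ are handled correctly. But there is a genuine gap in the case $(g,n)=(1,2)$: you assert that the natural map $\emcg\to\mathrm{Aut}\!\left(\ccs\right)$ is \emph{surjective} for the twice-punctured torus, and it is not. By Luo's theorem, $\ccs$ for $S_{1,2}$ is simplicially isomorphic to the curve complex of the five-punctured sphere, and $\mathrm{Aut}\!\left(\ccs\right)$ contains exotic automorphisms, not induced by any homeomorphism of $S_{1,2}$, which mix separating and non-separating curves. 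Consequently, knowing only that your $\Phi(F)$ lies in $\mathrm{Aut}\!\left(\ccs\right)$ does not produce a $\phi\in\emcg$ realising it, and your surjectivity argument for $\rho$ breaks down precisely at this point.

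The paper closes this gap with an extra step you are missing: one must show that $F$ preserves the set of separating curves, which (by the refined form of Luo's theorem) is exactly the condition under which an automorphism of $\ccs$ of $S_{1,2}$ is geometric. The paper's argument is: given a separating simple closed geodesic $\alpha$, one component of $S\setminus\alpha$ is a one-holed torus, which supports a minimal measured lamination $\lambda$ that is not a simple closed geodesic and whose unique peripheral curve is $\alpha$; by Corollary \ref{cor:key}, $F\!\left(\left[\alpha\right]\right)$ is the peripheral curve of the minimal, non-simple-closed-geodesic lamination $F\!\left(\left[\lambda\right]\right)$, hence bounds its supporting surface and is therefore separating. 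With that addition your proof goes through; the rest (the flag-complex observation via Property \ref{lemme:disjoint}, the density argument, and the identification of the kernel with the hyperelliptic involution in the two exceptional cases) is correct and matches the paper.
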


\begin{proof}
Let us prove that the natural homomorphism from $\emcg$ to $\autstr{\pmml}{\mathrm{P}\!\mathscr{S}}$ is an isomorphism if $(g,n)\neq(2,0), (1,2)$, and is surjective with a kernel of order $2$ otherwise.
Let $F\in\autstr{\pmml}{\mathrm{P}\!\mathscr{S}}$. Because of Lemma \ref{lemma:key}, $F$ preserves the set $\mathcal{S}$ and because $F$ also preserves the depth of elements of $\pmml$ one has that $F$ actually induces  an automorphism of the complex of curves. 

\underline{Case I}: If  $(g,n)\neq (2,0), (1,2)$.  By \cite{ivanov,korkmaz} and \cite{luo} one has that there exists a unique $g\in\emcg$ such that $g=F$ on $\mathcal{S}$, and therefore  by density of $\mathcal{S}$ in $\pmml$, one concludes that $g=F$ on $\pmml$, proving that $\emcg \simeq \autstr{\pmml}{\mathrm{P}\!\mathscr{S}}$.

\underline{Case II}: If $(g,n)=(1,2)$.  Let us prove first that $F$ also preserves the separating curves on $S$. Let $\alpha$ be a  simple closed geodesic which is separating. Then, $S\setminus \alpha$ has two connected components; one of them is isometric to the interior of a generalized hyperbolic pair of pants with two cusps, and the other one is isometric  to the interior of a  one-holed hyperbolic torus. On the latter component one can define a minimal measured lamination $\lambda$ which is not a simple closed geodesic. The peripheral curve associated with $\lambda$ is then $\alpha$. 
Therefore, by Corollary \ref{cor:key}  $F\left(\left[\lambda\right]\right)$ is (the projective class of)  a minimal measured lamination which is not a  simple closed geodesic and  its associated  peripheral curve is $F\left(\left[\alpha\right)\right)$ which is therefore a separating curve. Thus, by \cite{luo} one has that there exists a $g\in\emcg$ such that $g=F$ on $\mathcal{S}$, and by density one can say that the homomorphism from $\emcg$ to $\autstr{\pmml}{\mathrm{P}\!\mathscr{S}}$ is onto. Furthermore, because of the existence of a hyperelliptic involution one has that the kernel of such a homomorphism is of order two. Therefore, $\autstr{\pmml}{\mathrm{P}\!\mathscr{S}} \simeq \quot{\emcg}{\quot{\mathbb{Z}}{2\mathbb{Z}}}$.

\underline{Case III}: If $(g,n)=(2,0)$. As above, one still has that $F$ induces an automorphism of the corresponding complex of curves and therefore there exists $g\in \emcg$ such that $g=F$ on $\mathcal{S}$. Again,  by density of $\mathcal{S}$ one has that $g=F$ on $\pmml$. Because of the existence of a hyperelliptic involution on such a surface one concludes as before that $\autstr{\pmml}{\mathrm{P}\!\mathscr{S}} \simeq \quot{\emcg}{\quot{\mathbb{Z}}{2\mathbb{Z}}}$.
\end{proof}

\begin{remark}
We can also obtain Theorem \ref{main-theo} in the case of closed hyperbolic surfaces in a different way.  Indeed, one can use  \cite[Th\'{e}or\`{e}me 4]{papadop&ohshika2} after  observing that  Property \ref{lemme:disjoint} implies that for any $f\in\autstr{\mml}{\mathscr{S}}$, if $i\left( \lambda , \mu \right) =0$ then $i\left( f\left(\lambda\right) , f\left(\mu\right) \right)=0$. Moreover, one can easily prove that if $f$ is a self-homeomorphism of $\mml$ such that for any pair $(\lambda, \mu)$ of measured  lamination with $i(\lambda, \mu)=0$ we have $i\left( f\left(\lambda\right), f\left(\mu\right)\right)=0$,  then $f\in\autstr{\mml}{\mathscr{S}}$. 
\end{remark}

Let us continue this section by justifying the fact that we started this paper by assuming that $(g,n)\neq (0,3), (0,4), (1,1)$. If $(g,n)=(0,3)$, then $\pmml$ is the empty set and therefore there is no stratification.  Now, let $S$ be  either  a four-times-punctured hyperbolic sphere or a once-punctured hyperbolic torus. It is known that in this case the projective lamination space $\pmml$ is homeomorphic to $\mathbb{R}\cup\left\lbrace \infty\right\rbrace$---the one-point compactification of the real line. This comes from the fact that $\pmml$ is the Thurston compactification of  the Teichm\"{u}ller space of the surface $S$. By means of such a correspondence, the set of simple closed geodesics is identified with $\mathbb{Q}\cup\left\lbrace \infty \right\rbrace$,  whereas the set of uniquely ergodic  laminations which are not  simple closed geodesics is identified with $\mathbb{R}\setminus\mathbb{Q}$. This implies that strata are actually points, because there are all of depths equal to $0$. Therefore, one has the group $\autstr{\pmml}{\mathrm{P}\mathscr{S}}$  isomorphic to the group of self-homeomorphisms of $\mathbb{R}\cup\left\lbrace \infty \right\rbrace$, and  it is well known that such a group is much bigger than $\emcg \simeq  \gl2$.

\section{About the self-homeomorphisms of $\umml$}\label{sec:uml}

While working on this paper, the author observes a gap in the proof of the result of Papadopoulos in \cite{papadop2} which asserts that any self-homeomorphism of $\umml$ (when endowed with the quotient topology) preserves the set of simple closed geodesics. This section is devoted in fixing such a gap.

Before explaining where exactly the gap is, let us recall a few things about the space $\umml$. As mentionned earlier, this space when equipped with the quotient topology is non-Hausdorff. Furthermore, each stratum of the stratification $\mathscr{S}$ of $\mml$ is mapped onto a point in $\umml$. Following the terminology of \cite{ohshika}, one recalls  that $\left| \lambda \right| \in\umml$ is said to be \emph{unilaterally adherent} to $\left|\mu\right|\in \umml$, if any open neighbourhood of $\left| \mu \right|$ contains $\left| \lambda\right|$, while $\left| \lambda\right| \neq \left| \mu\right|$.  From this follows the notion of \emph{adherence height} of an element $\left| \lambda \right|\in \umml$ which actually coincides with the depth $\mathfrak{d}\!\left( \lambda\right)$. Indeed, because of \cite[Lemma 1]{ohshika}, having  $\left| \lambda \right|$  unilaterally adherent to $\left|\mu\right|$ is the same thing as having $\left| \mu \right| \subset \left| \lambda \right|$, when we consider these latter as geodesic laminations. Thus,  self-homeomorphisms of $\umml$  preserve the depth of associated  strata and therefore they map any element that corresponds to a minimal measured lamination to an element which also corresponds to a minimal measured lamination.  However, at this stage it is not clear why those maps preserve the set $\mathcal{S}$. Indeed, as it is pointed out at the end of Section \ref{sec:strat}, a simple closed geodesic and a minimal measured lamination that fills the interior of a one-holed hyperbolic torus have the same depth. Here is the gap in \cite[Proposition 3.4]{papadop2}. In order to fix it, we  will first prove, using the same idea as in Corollary \ref{cor:disjoint}, that any self-homeomorphism of $\umml$ preserves the disjointness  of supports of elements of $\umml$, and then we will prove that such homeomorphisms also preserve the set of unmeasured laminations of support a minimal measured lamination which is neither a weighted simple closed geodesic nor a maximal.

Let $f$ be a self-homeomorphism of $\umml$. 

Let $\left| \lambda \right|$ and $\left| \mu \right|$ be two elements of $\umml$ with disjoint supports. Without loss of generality one can assume that both $\lambda$ and $\mu$ are minimal measured laminations.  Let $\delta = \lambda+\mu$. It is a measured lamination of depth equal to $1$, so the unmeasured lamination $f\left( \left| \delta \right|\right)$ has an adherence height equal to $1$, meaning that,  its support contains two minimal components. Furthermore, both the supports of $f\left(\left| \lambda \right| \right)$ and $f\left(\left| \mu \right| \right)$ are different and are minimal geodesic laminations,  and  are contained in the support of $f\left( \left| \delta \right|\right)$. Therefore, the support of $f\left( \left| \delta \right|\right)$ must be the union of the support of  $f\left(\left| \lambda \right| \right)$ with the support of $ f\left(\left| \mu \right| \right)$. This shows that  the supports of the latter two unmeasured laminations are disjoint.  

Now, let us follow the strategy of the proof of Corollary \ref{cor:key}. Let $\left| \alpha \right|$ be  a peripheral curve associated with some minimal measured lamination $\nu$. Let $\left| \beta \right|$ be such that $f\left( \left| \alpha \right| \right)\cap \left| \beta \right|$. Then, since $f$ preserves the disjointness of supports we have $\left| \alpha \right| \cap f^{-1}\left( \left| \beta \right|\right)\neq \emptyset$. Thus, $\left| \nu \right| \cap f^{-1}\left( \left| \beta \right|\right)\neq \emptyset$ which implies that $f\left(\left| \nu \right|\right) \cap  \left| \beta \right|\neq \emptyset$. Therefore, by the characterisation of peripheral curves, one can say that $f\left(\left| \alpha \right| \right)$ is a peripheral curve associated with a minimal measured lamination of support $f\left(\left| \nu \right| \right)$.

Following the proof of  Lemma \ref{lemma:key}, we are now able to say that $f$ preserves the set $\mathcal{S}$.

Thus, since $f$ preserves the set of simple closed geodesics and the adherence height, it  induces an automorphisms of the corresponding complex of curves. If $(g,n)=(1,2)$, then $f$ further preserves the separating curves. It is the same proof as in Theorem \ref{main-theo}. Using \cite{ivanov, korkmaz} and \cite{luo}, there exists $g\in \emcg$ such that $g = f$ on $\mathcal{S}$. We can now apply \cite[Lemma 2]{ohshika} to obtain

\begin{corollaire}
The group of self-homeomorphisms of $\umml$ is isomorphic to $\emcg$ if $(g,n)\neq (2,0), (1,2)$, and it is  isomorphic to $\quot{\emcg}{\quot{\mathbb{Z}}{2\mathbb{Z}}}$ if $(g,n)=(2,0)$ or $(g,n)=(1,2)$.
\end{corollaire}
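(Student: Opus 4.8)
The plan is to show that the natural homomorphism $\emcg \to \mathrm{Homeo}\!\left(\umml\right)$ is surjective, and then to compute its kernel in the two exceptional cases. Since the passage from a simplicial automorphism of the complex of curves $\ccs$ to an element of $\emcg$ is already available through \cite{ivanov,korkmaz} and \cite{luo}, the whole weight of the argument is to produce, from an arbitrary self-homeomorphism $f$ of $\umml$, an automorphism of $\ccs$; concretely, to show that $f$ preserves the set $\mathcal{S}$ of simple closed geodesics together with the disjointness relation among supports.

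First I would record the two structural properties of $f$ established above. On the one hand, $f$ preserves adherence height, hence depth, so it sends minimal unmeasured laminations to minimal ones and, more generally, preserves the number of minimal components of a support. On the other hand, $f$ preserves disjointness of supports, by the additivity-of-depth argument used for Corollary \ref{cor:disjoint}. From these two facts, arguing exactly as in Corollary \ref{cor:key} and then Lemma \ref{lemma:key}, $f$ carries peripheral curves to peripheral curves and therefore maps $\mathcal{S}$ onto $\mathcal{S}$.

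With $f|_{\mathcal{S}}$ in hand, the plan is to observe that $f$ takes disjoint simple closed geodesics to disjoint simple closed geodesics (again by preservation of disjointness), so it induces a simplicial automorphism of $\ccs$. Invoking curve-complex rigidity \cite{ivanov,korkmaz,luo} then yields $g\in\emcg$ with $g=f$ on $\mathcal{S}$; in the case $(g,n)=(1,2)$ one first checks, via Corollary \ref{cor:key} applied to a minimal lamination filling the one-holed torus component of $S\setminus\alpha$, that $f$ preserves separating curves, so that Luo's theorem still applies. Finally \cite[Lemma 2]{ohshika} upgrades the agreement $g=f$ on $\mathcal{S}$ to agreement on all of $\umml$, giving surjectivity. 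When $(g,n)\neq(2,0),(1,2)$ the action of $\emcg$ is faithful and the homomorphism is injective, whereas for $(g,n)=(2,0),(1,2)$ the hyperelliptic involution acts trivially on $\umml$ and generates a kernel of order two, yielding $\quot{\emcg}{\quot{\mathbb{Z}}{2\mathbb{Z}}}$.

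The main obstacle is precisely the step this section exists to repair: because a simple closed geodesic and a minimal lamination filling a one-holed torus share the same depth, depth alone cannot detect $\mathcal{S}$, so the entire argument hinges on the preservation of disjointness and its consequence for peripheral curves. Everything downstream---the passage to $\ccs$, curve-complex rigidity, and the identification of the hyperelliptic kernel---is then routine.
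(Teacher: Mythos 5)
Your proposal is correct and follows essentially the same route as the paper's own argument in Section~\ref{sec:uml}: preservation of adherence height plus the disjointness argument, then the peripheral-curve characterisation as in Corollary~\ref{cor:key} and Lemma~\ref{lemma:key} to recover $\mathcal{S}$, the separating-curve check for $(g,n)=(1,2)$, curve-complex rigidity, and finally \cite[Lemma 2]{ohshika} together with the hyperelliptic involution for the exceptional kernels.
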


\medskip
\noindent {\bf Acknowledgements.} The author would like to express his sincere gratitude to Athanase Papadopoulos for his suggestion about the notion of stratification. The author also wants to thank Ken'ichi Ohshika for his remark and Vladimir Fock for having pointing out the pathology of the  once-punctured hyperbolic torus.  Furthermore, the author  was  partially supported by the U.S. National Science Foundation grants DMS 1107452, 1107263, 1107367 ``RNMS: GEometric structures And Representation varieties'' (the GEAR Network).

\textsc{Vincent Alberge, Fordham University, Department of Mathematics, 441 East Fordham Road, Bronx, NY 10458, USA}

\textit{E-mail address:} { \href{mailto:valberge@fordham.edu}{\tt valberge@fordham.edu} } 

\end{document}